\title[The Quantum Lefschetz Principle for Vector Bundles]{The Quantum Lefschetz Principle for Vector Bundles\\ as a Map Between Givental Cones}
\author[Coates]{Tom Coates}
\address{Department of Mathematics\\
Imperial College London\\
180 Queen's Gate\\
London SW7 2AZ 
\\UK}
\email{t.coates@imperial.ac.uk}
\keywords{Gromov--Witten invariants, quantum cohomology, Quantum Lefschetz Hyperplane Principle, Givental's Lagrangian cone, convex vector bundle}
\subjclass{14N35 (Primary); 53D45 (Secondary)}
\newcommand{\CC}{\mathbb{C}}
\newcommand{\ZZ}{\mathbb{Z}}
\newcommand{\NN}{\mathbb{N}}
\newcommand{\QQ}{\mathbb{Q}}
\newcommand{\cO}{\mathcal{O}}
\newcommand{\bt}{\mathbf{t}}
\newcommand{\vir}{\text{\rm vir}}
\DeclareMathOperator{\Res}{Res}
\DeclareMathOperator{\ev}{ev}
\renewcommand{\cH}{\mathcal{H}}
\renewcommand{\cL}{\mathcal{L}}
\newcommand{\correlator}[1]{\left \langle #1 \right \rangle}
\newcommand{\bc}{{\boldsymbol{c}}}
\newcommand{\be}{{\boldsymbol{e}}}
\newcommand{\bJ}{{\mathbf{J}}}
\theoremstyle{plain}
\newtheorem{theorem}{Theorem}[section]
\newtheorem{proposition}[theorem]{Proposition}
\theoremstyle{definition}
\newtheorem{remark}[theorem]{Remark}
\newtheorem{keyremark}[theorem]{Key Remark}
\begin{document}

\maketitle

\begin{abstract}
  Givental has defined a Lagrangian cone in a symplectic vector space which encodes all genus-zero Gromov--Witten invariants of a smooth projective variety $X$.  Let $Y$ be the subvariety in $X$ given by the zero locus of a regular section of a convex vector bundle. We review arguments of Iritani, Kim--Kresch--Pantev, and Graber, which give a very simple relationship between the Givental cone for $Y$ and the Givental cone for Euler-twisted Gromov--Witten invariants of $X$.  When the convex vector bundle is the direct sum of nef line bundles, this gives a sharper version of the Quantum Lefschetz Hyperplane Principle.
\end{abstract}

\section{Gromov--Witten Invariants and Twisted Gromov--Witten Invariants}
\label{sec:GW}

Given a smooth projective variety $X$, one can define \emph{Gromov--Witten invariants} of $X$ \cite{Kontsevich:enumeration,Kontsevich--Manin}:
\begin{equation}
  \label{eq:GW_invariant}
  \correlator{\gamma_1 \psi_1^{k_1},\ldots,\gamma_n \psi_n^{k_n}}_{g,n,d}^X := 
  \int_{[X_{g,n,d}]^\vir} 
  \prod_{i=1}^{i=n} \ev_i^\star \gamma_i \cup \psi_i^{k_i}
\end{equation}
Notation here is by now standard; a list of notation and definitions can be found in Appendix~\ref{sec:notation}.  Given a class $A \in H^\bullet(X_{g,n,d};\QQ)$, we can include it in the integral \eqref{eq:GW_invariant}, writing:
\begin{equation}
  \label{eq:GW_invariant_with_extra_insertion}
  \correlator{\gamma_1 \psi_1^{k_1},\ldots,\gamma_n \psi_n^{k_n} ; A}_{g,n,d}^X := 
  \int_{[X_{g,n,d}]^\vir} 
  A \cup \prod_{i=1}^{i=n} \ev_i^\star \gamma_i \cup \psi_i^{k_i}
\end{equation}
In particular, we can consider \emph{twisted Gromov--Witten invariants} \cite{Coates--Givental}.  Let $E \to X$ be a vector bundle, and let $\bc(\cdot)$ be an invertible multiplicative characteristic class.  We can evaluate $\bc$ on classes in K-theory by setting $\bc(A \ominus B) = \frac{\bc(A)}{\bc(B)}$.  The twisting class $E_{g,n,d} \in K^0(X_{g,n,d})$ \label{def:twisting_class} is defined by $E_{g,n,d} = \pi_! \ev^\star E$, where 
\[
\xymatrix{
  C \ar[r]^-{\ev} \ar[d]_\pi & X \\
  X_{g,n,d}
}
\]
is the universal family over the moduli space of stable maps.  $(\bc,E)$-twisted Gromov--Witten invariants of $X$ are intersection numbers of the form:
\begin{equation}
  \label{eq:twisted_GW_invariant}
  \correlator{\gamma_1 \psi_1^{k_1},\ldots,\gamma_n \psi_n^{k_n} ; \bc(E_{g,n,d})}_{g,n,d}^X 
\end{equation}
Consider the $S^1$-action \label{def:S1_action} on vector bundles $V \to B$ which rotates the fibers of $V$ and leaves the base $B$ invariant.  The $S^1$-equivariant Euler class $\be(\cdot)$ is invertible over the field of fractions $\QQ(\lambda)$ of $H^\bullet_{S^1}\big(\{\text{point}\}\big) = \QQ[\lambda]$.  Taking $\bc = \be$, we refer to twisted Gromov--Witten invariants \eqref{eq:twisted_GW_invariant} as Euler-twisted Gromov--Witten invariants. 

Givental has defined a Lagrangian cone $\cL_X$ in a symplectic vector space $\cH_X$ which encodes all genus-zero Gromov--Witten invariants of $X$ \cite{Givental:quadratic,Givental:symplectic}.  Fix a basis $\{\phi_\epsilon\}$ for $H^\bullet(X;\QQ)$, and let $\{\phi^\epsilon\}$ denote the dual basis with respect to the Poincar\'e pairing $(\cdot,\cdot)$ on $H^\bullet(X)$, so that $(\phi_\mu,\phi^\nu) = \delta_\mu^\nu$.  Let $\Lambda_X$ denote the Novikov ring of $X$; this is defined in Appendix~\ref{sec:notation}.  Consider the vector space (or rather, free $\Lambda_X$-module):
\[
\label{def:HX}
\cH_X := H^\bullet(X;\Lambda_X) \otimes \CC(\!(z^{-1})\!)
\]
equipped with the symplectic form (or rather, $\Lambda_X$-valued symplectic form):
\[
\label{def:Omega}
\Omega_X(f,g) := \Res_{z=0} \big(f(-z),g(z)\big) \, dz
\]
Let $\bt(z) = t_0 + t_1 z + t_2 z^2 + \cdots$, where $t_i \in H^\bullet(X;\Lambda_X)$.   A general point on Givental's Lagragian cone $\cL_X \subset \cH_X$ has the \label{def:LX} form:
\begin{equation}
  \label{eq:JX}
  \bJ_X(\bt) := 
  {-z} + \bt(z) + \sum
  \frac{Q^d}{n!} 
  \correlator{t_{k_1} \psi_1^{k_1},\ldots,t_{k_n} \psi_n^{k_n}, \phi^\epsilon \psi_{n+1}^m}^X_{0,n+1,d} \phi_\epsilon \, (-z)^{-m-1}
\end{equation}
where the sum runs over non-negative integers~$n$ and~$m$, multi-indices $k = (k_1,\ldots,k_n)$ in $\NN^n$, degrees $d \in H_2(X;\ZZ)$, and basis indices $\epsilon$.  Knowing the Lagrangian submanifold $\cL_X$ is equivalent to knowing all genus-zero Gromov--Witten invariants \eqref{eq:GW_invariant} of $X$.

A similar Lagrangian cone encodes all genus-zero Euler-twisted Gromov--Witten invariants of $X$.  Consider the twisted Poincar\'e pairing $(\alpha,\beta)_\be = \int_X \alpha \cup \beta \cup \be(E)$, and the twisted symplectic form:
\[
\label{def:Omega_e}
\Omega_\be(f,g) := \Res_{z=0} \big(f(-z),g(z)\big)_\be \, dz
\]
on $\cH_X$.  Let $\{\phi^\epsilon_\be\}$ denote the basis dual to $\{\phi_\epsilon\}$ with respect to the twisted Poincar\'e pairing, so that $(\phi_\mu,\phi^\nu_\be)_\be = \delta_\mu^\nu$.  A general point on the Lagrangian cone $\cL_\be \subset \big(\cH_X,\Omega_\be\big)$ has the   \label{def:LX_e} form:
\begin{equation}
  \label{eq:Je}
  \bJ_\be(\bt) := {-z} + \bt(z) + \sum
  \frac{Q^d}{n!} 
  \correlator{t_{k_1} \psi_1^{k_1},\ldots,t_{k_n} \psi_n^{k_n}, \phi^\epsilon_\be \psi_{n+1}^m; \be(E_{0,n+1,d})}^X_{0,n+1,d} \phi_\epsilon \, (-z)^{-m-1}
\end{equation}
where the sum runs over the same set as above.  Knowing $\cL_\be$ is equivalent to knowing all genus-zero Euler-twisted Gromov--Witten invariants of $X$. In this expository note, we describe a close relationship, in the case where the vector bundle $E$ is convex, between Euler-twisted invariants of $X$ and Gromov--Witten invariants of the subvariety $Y \subset X$ defined by a regular section of $E$.  We prove:

\begin{theorem}
  \label{thm:main}
  Let $X$ be a smooth projective variety.  Let $E \to X$ be a convex vector bundle,  let $Y$ be the subvariety in $X$ defined by a regular section of $E$, and let $i \colon Y \to X$ be the inclusion map.  Let $\bJ_\be$ denote the general point \eqref{eq:Je} on the Lagrangian cone $\cL_\be$ for Euler-twisted Gromov--Witten invariants of $X$.  Let $\bJ_Y$ denote the general point on the Lagrangian cone $\cL_Y$ for genus-zero Gromov--Witten invariants of $Y$, as in \eqref{eq:JX}.  Then the non-equivariant limit $\bJ_\be\big|_{\lambda = 0}$ is well-defined and satisfies:
\[
i^\star \bJ_\be(\bt)\big|_{\lambda=0} = \bJ_Y(i^\star \bt)
\]
In particular, $i^\star \cL_\be\big|_{\lambda = 0} \subset \cL_Y$.  

\noindent Throughout here we have applied the homomorphism $Q^\delta \mapsto Q^{i_\star \delta}$ to the Novikov ring of $Y$.
\end{theorem}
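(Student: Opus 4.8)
The plan is to check the identity coefficient-by-coefficient in $\cH_Y$. The pieces $-z$, $\bt(z)$, and the insertion terms $t_{k_j}\psi_j^{k_j}$ transform correctly under $i^\star$ by the elementary functorialities $\ev_j^X\circ j = i\circ\ev_j^Y$ and $j^\star\psi_j = \psi_j$, where $j\colon Y_{0,n+1,\delta}\to X_{0,n+1,d}$ (with $d = i_\star\delta$) is the natural map; so the whole content lies in matching the ``output'' term (the sum over $\epsilon$) of \eqref{eq:Je} with that of \eqref{eq:JX}. Fixing $n$, $m$, and a degree $d\in H_2(X;\ZZ)$, and writing $\Theta = \psi_{n+1}^m\prod_{j=1}^n \ev_j^\star(t_{k_j})\psi_j^{k_j}$, I first record that $\be(E)$ is invertible over $\QQ(\lambda)$, so the twisted dual basis is $\phi^\epsilon_\be = \be(E)^{-1}\phi^\epsilon$; since $\sum_\epsilon \phi_\epsilon\otimes\phi^\epsilon$ is Poincar\'e dual to the diagonal, the corresponding term of $\bJ_\be$ is the class
\[
V_d = (\ev_{n+1})_\star\Big(\be(E_{0,n+1,d})\cdot\ev_{n+1}^\star\be(E)^{-1}\cdot\Theta\cap[X_{0,n+1,d}]^\vir\Big).
\]

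To see that the non-equivariant limit exists, let $\rho\colon\mathcal C\to X_{0,n+1,d}$ be the universal curve, $\widetilde\ev\colon\mathcal C\to X$ the universal map, and $\sigma$ the section of $\rho$ giving the last marked point, so that $\widetilde\ev\circ\sigma = \ev_{n+1}$ and $E_{0,n+1,d} = \rho_\star\widetilde\ev^\star E$. Evaluation along $\sigma$ produces a short exact sequence of bundles
\[
0\to\widetilde E\to E_{0,n+1,d}\to\ev_{n+1}^\star E\to 0, \qquad \widetilde E := \rho_\star\big(\widetilde\ev^\star E(-\sigma)\big),
\]
in which convexity guarantees local freeness (the relevant $R^1\rho_\star$ vanish). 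Multiplicativity of the Euler class gives $\be(E_{0,n+1,d})\cdot\ev_{n+1}^\star\be(E)^{-1} = \be(\widetilde E)$, which is regular at $\lambda = 0$: this is precisely the cancellation of the pole coming from the twisted dual basis, and it shows that $V_d\big|_{\lambda=0} = (\ev_{n+1})_\star\big(e(\widetilde E)\,\Theta\cap[X_{0,n+1,d}]^\vir\big)$ is well-defined, where $e = \be\big|_{\lambda=0}$ denotes the ordinary Euler class. This reorganisation is the analytic input, due in essence to Iritani.

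The geometric heart is the evaluation of $i^\star V_d\big|_{\lambda=0}$. Base change along the regular embedding $i$ applied to the proper map $\ev_{n+1}$ gives $i^\star(\ev_{n+1})_\star = q_\star\, i^!$, where $Z := \ev_{n+1}^{-1}(Y)$ and $q\colon Z\to Y$. Over $\mathcal C|_Z$ the section $\widetilde\ev^\star s$ vanishes along $\sigma$, because the last marked point now lands in $Y = \{s = 0\}$; it therefore descends to a section $\widetilde s$ of $\widetilde E|_Z$ whose zero locus is exactly $\bigsqcup_{i_\star\delta = d} Y_{0,n+1,\delta}$, the maps in $Z$ that factor through $Y$. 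Since $\operatorname{rk}\widetilde E$ equals the expected codimension of $Y_{0,n+1,\delta}$ in $Z$, the Kim--Kresch--Pantev comparison of virtual classes, via Graber's functorial localization, identifies the refined zero locus of $\widetilde s$ with the virtual class, giving $e(\widetilde E)\cap i^![X_{0,n+1,d}]^\vir = \sum_{i_\star\delta = d}[Y_{0,n+1,\delta}]^\vir$. As $\Theta$ restricts to the corresponding $Y$-insertions and $q$ restricts to $\ev_{n+1}^Y$ on each $Y_{0,n+1,\delta}$, this yields $i^\star V_d\big|_{\lambda=0} = \sum_{i_\star\delta=d}(\ev_{n+1}^Y)_\star(\Theta_Y\cap[Y_{0,n+1,\delta}]^\vir)$, which is the matching term of $\bJ_Y(i^\star\bt)$.

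I expect the genuine obstacle to be this last comparison: one must verify that $\widetilde s$ is a regular section and that its vanishing carries exactly the virtual class of the space of stable maps to $Y$ --- that is, that the obstruction theory of $X_{0,n+1,d}$ cut down by $\widetilde E$ along $Z$ agrees with the intrinsic obstruction theory on $Y$. Convexity is indispensable here, both to make $\widetilde E$ a bundle and to control the behaviour over the boundary. Granting this, summing the degree-$d$ terms under the homomorphism $Q^\delta\mapsto Q^{i_\star\delta}$ assembles the identity $i^\star\bJ_\be(\bt)\big|_{\lambda=0} = \bJ_Y(i^\star\bt)$; and because every point of $\cL_\be$ has the form $\bJ_\be(\bt)$ while $i^\star\bt$ is an admissible argument of $\bJ_Y$, the inclusion $i^\star\cL_\be\big|_{\lambda=0}\subset\cL_Y$ follows at once.
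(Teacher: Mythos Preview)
Your argument is correct and follows essentially the same route as the paper: the same exact sequence (the paper writes $E'_{0,n+1,d}$ for your $\widetilde E$) to show the limit exists, the same Kim--Kresch--Pantev comparison of virtual classes on $Z=\ev_{n+1}^{-1}(Y)$, and the same projection-formula bookkeeping. Two small imprecisions worth fixing: your displayed identity $e(\widetilde E)\cap i^![X_{0,n+1,d}]^\vir=\sum_{i_\star\delta=d}[Y_{0,n+1,\delta}]^\vir$ is an equation of classes on $Z$, so the right-hand side needs the pushforward along $G\colon\bigsqcup Y_{0,n+1,\delta}\to Z$ (compare the paper's Proposition~\ref{pro:comparison}(A)); and KKP does not require $\widetilde s$ to be a \emph{regular} section---the comparison $[Y_{0,n+1,\delta}]^\vir=0_{E_{0,n+1,d}}^![X_{0,n+1,d}]^\vir$ is a virtual statement that holds without any transversality.
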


\begin{remark}
  A vector bundle $E \to X$ is called convex if and only if $H^1(C,f^\star E) = 0$ for all stable maps $f \colon C \to X$ such that the curve $C$ has genus zero. Globally generated vector bundles are automatically convex, as are direct sums of nef line bundles.
\end{remark}

\begin{remark}
  If the dimension of $Y$ is at least $3$ then, by the Lefschetz theorem, the homomorphism of Novikov rings $\Lambda_Y \to \Lambda_X$ given by $Q^\delta \mapsto Q^{i_\star \delta}$ is an isomorphism.
\end{remark}

\begin{remark}
  In the non-equivariant limit, the map $i^\star \colon \cH_X \to \cH_Y$ becomes symplectic: it satisfies $i^\star \Omega_\be\big|_{\lambda=0} = \Omega_Y$.  Thus Theorem~\ref{thm:main} fits neatly into a general story that encompasses the Crepant Resolution Conjecture~\cite{Coates--Iritani--Tseng,Coates--Ruan}, Brown's toric bundle theorem~\cite{Brown}, and so on: geometrically-natural operations in Gromov--Witten theory give rise to symplectic transformations of Givental's symplectic space that preserve the Lagrangian cones.
\end{remark}

\begin{keyremark}
Only the statement of Theorem~\ref{thm:main} is new.  As we will see, the proof is a very minor variation of an argument by Iritani~\cite[Proposition~2.4]{Iritani:periods}.  Iritani's result in turn builds on arguments by Kim--Kresch--Pantev~\cite{Kim--Kresch--Pantev} and Graber~\cite[\S2]{Pandharipande:after}.
\end{keyremark}

\begin{remark}
  Theorem~\ref{thm:main} improves upon \cite[formula~19]{Coates--Givental}, which roughly speaking, in the special case where $E$ is the direct sum of nef line bundles, relates $\bJ_\be(\bt)\big|_{\lambda=0}$ to $i_\star \bJ_Y(i^\star \bt)$.  The improved version determines invariants of $Y$ with one insertion (that at the last marked point) involving an arbitrary cohomology class on $Y$, whereas the original version determined only invariants of $Y$ such that all insertions are pullbacks of cohomology classes on $X$.  When combined with the Lee--Pandharipande reconstruction theorem~\cite{Lee--Pandharipande} this determines, under moderate hypotheses on $Y$, the big quantum cohomology of $Y$.  This should be compared with \S0.3.2 of ~ibid., which gives a reconstruction result for Gromov--Witten invariants of $Y$ such that all insertions are pullbacks of cohomology classes on $X$.  One can use the same approach together with the Abelian/Non-Abelian Correspondence with bundles~\cite[\S6.1]{Ciocan-Fontanine--Kim--Sabbah} to determine the genus-zero Gromov--Witten invariants of many subvarieties of flag manifolds and partial flag bundles.  
\end{remark}

\begin{remark}
  The formulation in Theorem~\ref{thm:main} is well-suited to proving mirror theorems for toric complete intersections or subvarieties of flag manifolds.  One first  obtains a family $t \mapsto I_\be(t,z)$ of elements of $\cL_{\be}$, by combining the Mirror Theorem for toric varieties or toric Deligne--Mumford stacks~\cite{Givental:toric,CCIT:stacks_1,Cheong--Ciocan-Fontanine--Kim} with the Quantum Lefschetz theorem~\cite{Coates--Givental} or the Abelian/Non-Abelian Correspondence with bundles~\cite[\S6.1]{Ciocan-Fontanine--Kim--Sabbah}.  After taking the non-equivariant limit $\lambda \to 0$ and applying Theorem~\ref{thm:main}, one can then argue as in \cite[\S9]{Coates--Givental} or \cite[Example~9]{CCIT:stacks_2}.
\end{remark}

\section{The Proof of Theorem~\ref{thm:main}}

\subsection{The Non-Equivariant Limit Exists}

For the remainder of this note, we consider only stable maps of genus zero.  Since $E$ is convex, we have that $R^1 \pi_\star \ev^\star E = 0$ and hence that $E_{0,n+1,d}$ is a vector bundle.  The fiber of $E_{0,n+1,d}$ over a stable map $f \colon C \to X$ is $H^0(C,f^\star E)$, and thus there is an exact sequence of vector bundles:
\begin{equation}
  \label{def:E_prime}
  \xymatrix{
    0 \ar[rr] &&
    E_{0,n+1,d}' \ar[rr] &&
    E_{0,n+1,d} \ar[rr]^{\ev_{n+1}} &&
    \ev_{n+1}^\star E \ar[rr] &&
    0
  }
\end{equation}
This implies that $\be(E_{0,n+1,d}) = \be(E_{0,n+1,d}') \be(\ev_{n+1}^\star E)$.  The Projection Formula, together with the fact that $\phi^\epsilon = \phi^\epsilon_\be \be(E)$, gives that:
\[
\bJ_\be(\bt) = 
{-z} + \bt(z) + \sum
\frac{Q^d}{n! } 
(\ev_{n+1})_\star
\left[
  [X_{0,n+1,d}]^\vir \cap \be(E_{0,n+1,d}') \cup \psi_{n+1}^m \cup \prod_{i=1}^n \ev_i^\star t_{k_i} \cup \psi_i^{k_i}
\right] (-z)^{-m-1}
\]
This makes it clear that the non-equivariant limit $\bJ_\be(\bt)\big|_{\lambda = 0}$ exists.  Let us write $e(\cdot)$ for the non-equivariant Euler class, noting that $e(\cdot)$ is the non-equivariant limit of $\be(\cdot)$.

\subsection{A Comparison of Virtual Fundamental Classes}

Consider the diagram:
\begin{equation}
  \label{eq:main_diagram}
  \begin{aligned}
    \xymatrix{
      {\displaystyle \coprod_{\delta : i_\star \delta = d} Y_{0,n+1,\delta}} \ar[rr]^G \ar[dd]_{\ev} && Z \ar[rr]^F \ar[dd]_{\ev} && X_{0,n+1,d} \ar[dd]_{\ev} \\
      \\
      Y^{n+1} \ar[rrdd]_r \ar[rr]^g && X^n \times Y \ar[dd]_q \ar[rr]^f && X^{n+1} \ar[dd]_p \\
      \\
      && Y \ar[rr]^i && X
    }
  \end{aligned}
\end{equation}
where $p$,~$q$, and~$r$ are projections onto the last factor of their domains (which are products);  $f$~and~$g$ are induced by the inclusion $i\colon Y \to X$; the maps~$\ev$ in the first and third columns are the evaluation maps $\ev_1 \times \cdots \times \ev_{n+1}$; the upper right-hand square is Cartesian; the composition $G \circ F$ is the union of canonical inclusions $Y_{0,n+1,\delta} \to X_{0,n+1,d}$; and the map $G$ is defined by the universal property of the fiber product $Z$.  The stack $Z$ consists of those stable maps in $X_{0,n+1,d}$ such that the last marked point lies in $Y$; it is the zero locus of the section $\ev_{n+1}^\star s \in \Gamma(X_{0,n+1,d}, \ev_{n+1}^\star E)$.  The map $\ev$ in the second column is also given by $\ev_1 \times \cdots \times \ev_{n+1}$.

\begin{proposition}
  \label{pro:comparison}
  With notation as above, we have:
  \begin{enumerate}
  \item[(A)]
    \[    
    f^! \Big( e(E_{0,n+1,d}') \cap [X_{0,n+1,d}]^\vir \Big)
    =
    \sum_{\delta : i_\star \delta = d} G_\star [Y_{0,n+1,\delta}]^\vir
    \]
  \item[(B)] For any $(k_1,\ldots,k_{n+1}) \in \NN^{n+1}$:
    \[
    f^\star \ev_\star \Big( \psi_1^{k_1} \cup \cdots \cup \psi_{n+1}^{k_{n+1}} \cup e(E_{0,n+1,d}') \cap [X_{0,n+1,d}]^\vir \Big)
    =
    \sum_{\delta : i_\star \delta = d}
    g_\star \ev_\star 
    \Big( \psi_1^{k_1} \cup \cdots \cup \psi_{n+1}^{k_{n+1}} \cap [Y_{0,n+1,\delta}]^\vir \Big)
    \]
  \end{enumerate}
\end{proposition}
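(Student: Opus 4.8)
Since part (B) will follow from part (A) by a formal manipulation, I focus first on (A). The idea is to recognize the refined Gysin map $f^!$ as a localized Euler class and then to compare it, through the tautological section of the twisting bundle, with the Kim--Kresch--Pantev description of the virtual classes of the $Y_{0,n+1,\delta}$. The map $f$ is the regular embedding of $X^n \times Y$ into $X^{n+1}$ cut out by $p^\star s$, a section of $p^\star E$; since the upper-right square in \eqref{eq:main_diagram} is Cartesian and $\ev^\star p^\star s = \ev_{n+1}^\star s$ cuts out $Z$ inside $X_{0,n+1,d}$, the refined Gysin map $f^!$ agrees with the localized Euler class $(\ev_{n+1}^\star s)^!$ of the bundle $\ev_{n+1}^\star E$. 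Because Chern classes commute with refined Gysin maps, I may pull $e(E_{0,n+1,d}')$ outside:
\[
f^!\Big( e(E_{0,n+1,d}') \cap [X_{0,n+1,d}]^\vir\Big) = e\big(E_{0,n+1,d}'|_Z\big) \cap (\ev_{n+1}^\star s)^! [X_{0,n+1,d}]^\vir .
\]

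Next I would introduce the tautological section $\sigma = \pi_\star \ev^\star s$ of the convex twisting bundle $E_{0,n+1,d} = \pi_\star \ev^\star E$. Over a stable map $\phi \colon C \to X$ its value is $\phi^\star s \in H^0(C,\phi^\star E)$, so its zero locus is exactly the substack of maps factoring through $Y$, which by convexity is the image of $G$ and is canonically identified with $\coprod_\delta Y_{0,n+1,\delta}$. Under the exact sequence \eqref{def:E_prime} the section $\sigma$ projects to $\ev_{n+1}^\star s$, so the refined Euler class of $E_{0,n+1,d}$ factors as a two-step operation $\sigma^! = \bar\sigma^! \circ (\ev_{n+1}^\star s)^!$: first $(\ev_{n+1}^\star s)^!$ cutting out $Z$, then the induced section $\bar\sigma$ of $E_{0,n+1,d}'|_Z$ cutting out $\mathrm{im}(G)$. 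Fulton's identity that a localized top Chern class pushes forward to the ordinary top Chern class then gives, for $\iota \colon \mathrm{im}(G) \hookrightarrow Z$,
\[
\iota_\star\, \bar\sigma^!\Big((\ev_{n+1}^\star s)^![X_{0,n+1,d}]^\vir\Big) = e\big(E_{0,n+1,d}'|_Z\big) \cap (\ev_{n+1}^\star s)^![X_{0,n+1,d}]^\vir .
\]
Comparing the two displays, and using that $G$ factors as $\iota$ composed with the canonical identification $\coprod_\delta Y_{0,n+1,\delta} \cong \mathrm{im}(G)$, statement (A) reduces to the single identity $\sum_\delta [Y_{0,n+1,\delta}]^\vir = \sigma^![X_{0,n+1,d}]^\vir$.

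This last identity is the crux, and is precisely the comparison of virtual fundamental classes supplied by Kim--Kresch--Pantev and Graber. It rests on the compatibility of obstruction theories: the normal bundle sequence $0 \to T_Y \to i^\star T_X \to i^\star E \to 0$ of the regular embedding $i$, pushed through $R\pi_\star \ev^\star$, yields --- because convexity kills $R^1\pi_\star\ev^\star E$ --- a morphism relating the obstruction theory of $X_{0,n+1,d}$ over the Artin stack of prestable curves to that of $Y_{0,n+1,\delta}$, with $E_{0,n+1,d}$ measuring the difference; functoriality of virtual classes under this data gives the equality. I expect this to be the main obstacle: one must verify that $\sigma$ cuts out $\coprod_\delta Y_{0,n+1,\delta}$ with the correct scheme structure and that the obstruction theories match, and it is exactly here that convexity is indispensable.

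Finally, (B) follows formally from (A). Since $f$ is a regular embedding and $\ev_\star(\cdots)$ is a class on the smooth variety $X^{n+1}$, I may replace $f^\star$ by $f^!$; the refined Gysin map commutes with the proper pushforward $\ev_\star$ along the Cartesian square, so
\[
f^\star \ev_\star\Big(\textstyle\prod_i \psi_i^{k_i} \cup e(E_{0,n+1,d}') \cap [X_{0,n+1,d}]^\vir\Big) = \ev_\star\, f^!\Big(\textstyle\prod_i \psi_i^{k_i} \cup e(E_{0,n+1,d}') \cap [X_{0,n+1,d}]^\vir\Big).
\]
The $\psi$-classes are pulled back from $X_{0,n+1,d}$ and restrict to the corresponding $\psi$-classes on each $Y_{0,n+1,\delta}$, so $f^!$ commutes with capping by $\prod_i \psi_i^{k_i}$; applying (A) and the projection formula for $G$ turns the right-hand side into $\sum_\delta (\ev \circ G)_\star\big(\prod_i \psi_i^{k_i} \cap [Y_{0,n+1,\delta}]^\vir\big)$. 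The commuting lower-left square of \eqref{eq:main_diagram}, namely $\ev \circ G = g \circ \ev$, then lets me rewrite $(\ev\circ G)_\star = g_\star \ev_\star$, which is exactly the right-hand side of (B).
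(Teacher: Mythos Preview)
Your argument is correct and matches the paper's proof: both identify $f^!$ with the refined Euler class coming from $\ev_{n+1}^\star E$, factor the localized Euler class of $E_{0,n+1,d}$ through the exact sequence \eqref{def:E_prime}, invoke Kim--Kresch--Pantev for the identity $\sum_\delta [Y_{0,n+1,\delta}]^\vir = \sigma^![X_{0,n+1,d}]^\vir$, and use Fulton's ``pushforward of localized top Chern class equals ordinary top Chern class'' for the $E'$-piece. The only cosmetic difference is that the paper packages these steps in a single Cartesian diagram built from zero sections and runs the computation from the $Y$-side toward $f^!$, whereas you run it in the opposite direction using the language of sections; your derivation of (B) from (A) is likewise the same as the paper's.
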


\begin{proof}
  Let $0_X \colon X_{0,n+1,d} \to E_{0,n+1,d}$, $0'_X \colon X_{0,n+1,d} \to E'_{0,n+1,d}$, $0'_Z \colon Z \to E'_{0,n+1,d}\big|_Z$ denote the zero sections.  \label{def:zero_sections} Consider the Cartesian diagram:
  \[
  \xymatrix{
    {\displaystyle \coprod_{\delta : i_\star \delta = d} Y_{0,n+1,\delta}}  \ar[rr]^G \ar[dd]_{G} && Z \ar[rr]^F \ar[dd]_{\tilde{s}|_Z} && X_{0,n+1,d} \ar[dddd]_{\tilde{s}} \\
    \\
    Z \ar[dd]_F \ar[rr]^{0_Z'} && E_{0,n+1,d}'\big|_Z \ar[dd] \\
    \\
    X_{0,n+1,d} \ar[rr]^{0_X'} && E_{0,n+1,d}' \ar[rr]^j &&  E_{0,n+1,d} 
  }
  \]
  where $j$ is the inclusion from \eqref{def:E_prime} and $\tilde{s}$ is the section of $E_{0,n+1,d}$ induced by the section $s \colon X \to E$ that defines $Y$.  Note that, on the bottom row, $0_X' \circ j = 0_X$.  We have:
  \begin{align*}
    \sum_{\delta : i_\star \delta = d} G_\star [Y_{0,n+1,\delta}]^\vir
    & = \sum_{\delta : i_\star \delta = d} G_\star 0_X^! [X_{0,n+1,d}]^\vir && \text{(functoriality~\cite{Kim--Kresch--Pantev})} \\
    & = \sum_{\delta : i_\star \delta = d} G_\star (0'_X)^! j^! [X_{0,n+1,d}]^\vir && \text{(functoriality~\cite[Theorem~6.5]{Fulton})} \\
    & = \sum_{\delta : i_\star \delta = d} (0'_X)^\star (\tilde{s}|_Z)_\star j^! [X_{0,n+1,d}]^\vir && \text{(by \cite[Theorem~6.2]{Fulton})} \\
    & = e\big(E_{0,n+1,d}'|_Z\big) \cap j^! [X_{0,n+1,d}]^\vir \\
    & = j^! \Big(e(E_{0,n+1,d}') \cap [X_{0,n+1,d}]^\vir \Big) \\
    & = f^! \Big(e(E_{0,n+1,d}') \cap [X_{0,n+1,d}]^\vir \Big) 
  \end{align*}
  This proves (A).  Since $f^\star \ev_\star = \ev_\star f^!$ \cite[Theorem~6.2]{Fulton} and $g_\star \ev_\star = \ev_\star G_\star$, and since the classes $\psi_i$ on $Z$ and on $Y_{0,n+1,\delta}$ are pulled back from the class $\psi_i$ on $X_{0,n+1,d}$, (A) implies (B).
\end{proof}

\subsection{Applying the Projection Formula}

We now deduce Theorem~\ref{thm:main} from Proposition~\ref{pro:comparison}.  This amounts to repeated application of the Projection Formula.  Recall the diagram \eqref{eq:main_diagram}.  The non-equivariant limit $\bJ_\be(\bt)\big|_{\lambda=0}$ is equal to: 
\begin{multline*}
  {-z} + \bt(z) + \sum
  \frac{Q^d}{n! } 
  (\ev_{n+1})_\star
  \left[
    [X_{0,n+1,d}]^\vir \cap e(E_{0,n+1,d}') \cup \psi_{n+1}^m \cup \prod_{i=1}^n \ev_i^\star t_{k_i} \cup \psi_i^{k_i}
  \right] (-z)^{-m-1} \\
  =
    {-z} + \bt(z) + \sum
  \frac{Q^d}{n! (-z)^{m+1}} 
  p_\star
  \left[
    \ev_\star \Big(
    [X_{0,n+1,d}]^\vir \cap e(E_{0,n+1,d}') \cup \psi_{n+1}^m \cup \prod_{i=1}^n \psi_i^{k_i}
    \Big)
    \cup \bigotimes_{i=1}^n t_{k_i}
  \right] 
\end{multline*}
Using $i^\star p_\star = q_\star f^\star$, we see that the pullback $i^\star \bJ_\be(\bt)\big|_{\lambda=0}$ is:
\[
 {-z} + i^\star \bt(z) + \sum
\frac{Q^d}{n! (-z)^{m+1}} 
q_\star
\left[
  f^\star \ev_\star \Big(
  [X_{0,n+1,d}]^\vir \cap e(E_{0,n+1,d}') \cup \psi_{n+1}^m \cup \prod_{i=1}^n \psi_i^{k_i}
  \Big)
  \cup \bigotimes_{i=1}^n t_{k_i}
\right] 
\]
Proposition~\ref{pro:comparison}(B) now gives:
\[
i^\star \bJ_\be(\bt)\big|_{\lambda=0} =
{-z} + i^\star \bt(z) + {\sum}'
\frac{Q^{i_\star \delta}}{n! (-z)^{m+1}} 
q_\star
\left[
  g_\star \ev_\star \Big(
  [Y_{0,n+1,\delta}]^\vir \cup \psi_{n+1}^m \cup \prod_{i=1}^n \psi_i^{k_i}
  \Big)
  \cup \bigotimes_{i=1}^n t_{k_i}
\right]
\]
where the sum $\sum'$ runs over non-negative integers~$n$ and~$m$, multi-indices $k = (k_1,\ldots,k_n)$ in $\NN^n$, degrees $\delta \in H_2(Y;\ZZ)$,  and basis indices $\epsilon$.  Applying the Projection Formula again, we see that:
\begin{align*}
  i^\star \bJ_\be(\bt)\big|_{\lambda=0} & =
  {-z} + i^\star \bt(z) + {\sum}'
  \frac{Q^{i_\star \delta}}{n! (-z)^{m+1}} 
  q_\star
  \left[
    g_\star \ev_\star \Big(
    [Y_{0,n+1,\delta}]^\vir \cup \psi_{n+1}^m \cup \prod_{i=1}^n \psi_i^{k_i}
    \Big)
    \cup \bigotimes_{i=1}^n t_{k_i}
  \right] \\
  & =
  {-z} + i^\star \bt(z) + {\sum}'
  \frac{Q^{i_\star \delta}}{n! (-z)^{m+1}} 
  q_\star g_\star
  \left[
    \ev_\star \Big(
    [Y_{0,n+1,\delta}]^\vir \cup \psi_{n+1}^m \cup \prod_{i=1}^n \psi_i^{k_i}
    \Big)
    \cup g^\star \bigotimes_{i=1}^n t_{k_i}
  \right] \\
  & =
  {-z} + i^\star \bt(z) + {\sum}'
  \frac{Q^{i_\star \delta}}{n! (-z)^{m+1}} 
  r_\star
  \left[
    \ev_\star \Big(
    [Y_{0,n+1,\delta}]^\vir \cup \psi_{n+1}^m \cup \prod_{i=1}^n \psi_i^{k_i}
    \Big)
    \cup \bigotimes_{i=1}^n i^\star t_{k_i}
  \right] \\
  & = \bJ_Y(i^\star \bt) \big|_{Q^\delta \mapsto Q^{i_\star \delta}}
\end{align*}
The Theorem is proved. \qed

\begin{remark}
  \label{rem:stacks}
  Let $X$ be a smooth Deligne--Mumford stack with projective coarse moduli space, let $E \to X$ be a convex vector bundle, let $Y$ be the substack in $X$ defined by a regular section of $E$, and let $i \colon IY \to IX$ be the map of inertia stacks induced by the inclusion $Y \to X$.  The analog of Theorem~\ref{thm:main} holds in this context, with the same proof: cf.~\cite[Proposition~2.4]{Iritani:periods}.  Note that a convex line bundle on a Deligne--Mumford stack is necessarily the pullback of a line bundle on the coarse moduli space \cite{CGIJJM}.
\end{remark}

\section*{Acknowledgements}

\noindent I thank Alessio Corti, Tom Graber, Hiroshi Iritani, Yuan-Pin Lee, and Andrea Petracci for useful conversations.

\bibliography{bibliography}{}
\bibliographystyle{plain}

\appendix

\section{Notation}
\label{sec:notation}

What follows is a list of notation and definitions: first for symbols in Roman font, then for Greek symbols, then for miscellaneous symbols.

\medskip

\begin{longtable}{lp{0.85\textwidth}}
  $\bc$ & an invertible multiplicative characteristic class \\
  $\be$ & the $S^1$-equivariant Euler class; see page~\pageref{def:S1_action} for the definition of the $S^1$-action\\
  $e$ & the non-equivariant Euler class \\
  $E$ & a convex vector bundle over $X$ \\
  $E_{g,n,d}$ & the twisting class $E_{g,n,d} \in K^0(X_{g,n,d})$; see page~\pageref{def:twisting_class} \\
  $E'_{0,n+1,d}$ & a sub-bundle of $E_{0,n+1,d}$; see page~\pageref{def:E_prime} \\
  $\ev_i$ & the evaluation map $X_{g,n,d} \to X$ at the $i$th marked point \\
  $\cH_X$, $\cH_Y$ & Givental's symplectic vector space; see page~\pageref{def:HX} \\
  $\cL_\be$ & Givental's Lagrangian cone for Euler-twisted invariants of $X$; see page~\pageref{def:LX_e} \\
  $\cL_X$, $\cL_Y$ & Givental's Lagrangian cone for $X$, $Y$; see page~\pageref{def:LX} \\
  $i$ & the inclusion map $Y \to X$ \\
  $j$ & the inclusion map $E_{0,n+1,d}' \to E_{0,n+1,d}$ \\
  $\bJ_\be(\bt)$ & a general point on $\cL_\be$; see \eqref{eq:Je} \\
  $\bJ_X(\bt)$ & a general point on $\cL_X$; see \eqref{eq:JX} \\
  $k_i$ & a non-negative integer \\
  $Q^d$ & the representative of $d \in H_2(X;\ZZ)$ in the Novikov ring $\Lambda_X$ \\
  $\bt$ & $\bt(z) = t_0 + t_1 z + t_2 z^2 + \cdots$ where $t_i \in H^\bullet(X)$ \\
  $t_i$ & a cohomology class on $X$ \\
  $X$ & a smooth projective variety \\
  $X_{g,n,d}$ & the moduli space of stable maps to $X$, from genus-$g$ curves with $n$~marked points, of degree $d \in H_2(X;\ZZ)$ \cite{Kontsevich:enumeration,Kontsevich--Manin}\\
  $[X_{g,n,d}]^\vir$ & the virtual fundamental class of the moduli space of stable maps to $X$ \cite{Li--Tian,Behrend--Fantechi} \\
  $Y$ & a subvariety of $X$ cut out by a regular section of $E$ \\
  $Y_{g,n,d}$ & the moduli space of stable maps to $Y$, from genus-$g$ curves with $n$~marked points, of degree $d \in H_2(Y;\ZZ)$ \cite{Kontsevich:enumeration,Kontsevich--Manin}\\
  $[Y_{g,n,d}]^\vir$ & the virtual fundamental class of the moduli space of stable maps to $Y$ \cite{Li--Tian,Behrend--Fantechi} \\ 
  \\
  $\gamma_i$ & a cohomology class on $X$ \\
  $\lambda$ & the generator of $H^\bullet_{S^1}\big(\{\text{point} \} \big)$ given by the first Chern class of $\cO(1) \to \CC P^\infty \cong BS^1$ \\
  $\Lambda_X$ & the Novikov ring of $X$; this is a completion of the group ring $\QQ\big[H_2(X;\ZZ)\big]$ with respect to the valuation $v(Q^d) = \int_d \omega$, where $Q^d$ is the representative of $d \in H_2(X;\ZZ)$ in the group ring and $\omega$ is the K\"ahler form on $X$ \\
  $\phi_\epsilon$ & an element of the basis $\{\phi_\epsilon\}$ for $H^\bullet(X;\QQ)$ \\
  $\phi^\epsilon$ & an element of the dual basis $\{\phi^\epsilon\}$ for $H^\bullet(X;\QQ)$, so that $(\phi_\mu,\phi^\nu) = \delta_\mu^\nu$ \\
  $\psi_i$ & the first Chern class of the universal cotangent line bundle $L_i \to X_{g,n,d}$ at the $i$th marked point \\
  $\Omega_X$, $\Omega_\be$, $\Omega_Y$ & the symplectic forms on $\cH_X$, $\cH_X$, and $\cH_Y$ respectively; see page~\pageref{def:Omega} \\
  \\
  $0_X$, $0_X'$, $0_Z'$ & zero section maps; see page~\pageref{def:zero_sections} \\
  $(\cdot,\cdot)$ & the Poincar\'e pairing on $H^\bullet(X)$, $(\alpha,\beta) = \int_X \alpha \cup \beta$ \\
  $(\cdot,\cdot)_\be$ & the twisted Poincar\'e pairing on $H^\bullet(X)$, $(\alpha,\beta) = \int_X \alpha \cup \beta \cup \be(E)$ \\
  $\correlator{\cdots \vphantom{\big|}}_{g,n,d}^X$ & Gromov--Witten invariants or twisted Gromov--Witten invariants of $X$; see (\ref{eq:GW_invariant}--\ref{eq:twisted_GW_invariant})
\end{longtable}

\end{document}